%%%%%%%%%%%%%%%%%%%%%%%%%%%%%%%%%%%%%%%%%%%%%%%%%%%
\documentclass[12pt]{amsart}

\setlength{\textheight}{23cm}
\setlength{\textwidth}{16cm}
\setlength{\topmargin}{-0.8cm}
\setlength{\parskip}{0.3\baselineskip}
\hoffset=-1.4cm

\newtheorem{theorem}{Theorem}[section]
\newtheorem{proposition}[theorem]{Proposition}
\newtheorem{lemma}[theorem]{Lemma}

\theoremstyle{remark}
\newtheorem{remark}[theorem]{Remark}

\numberwithin{equation}{section}

\baselineskip=16pt

\newcommand{\gm}{{\mathbb G}_m}

\newcommand{\sQ}{\mathcal{Q}}

\newcommand{\sO}{\mathcal{O}}

\newcommand{\sE}{\mathcal{E}}
\newcommand{\sF}{{\mathcal{F}}}

\newcommand{\sK}{{\mathcal{K}}}
\newcommand{\sL}{{\mathcal{L}}}

\newcommand{\bP}{{\bf P}}
\newcommand{\bQ}{{\bf Q}}

\newcommand{\rQ}{{\rm Q}}

\newcommand{\univ}{{\rm univ}}
\DeclareMathOperator{\quot}{Quot}
\DeclareMathOperator{\sym}{Sym}

\DeclareMathOperator{\Hom}{Hom}

\begin{document}

\title[A generalized Quot scheme and meromorphic vortices]{A generalized Quot scheme
and meromorphic vortices}

\author[I. Biswas]{Indranil Biswas}

\address{School of Mathematics, Tata Institute of Fundamental Research,
Homi Bhabha Road, Bombay 400005, India}

\email{indranil@math.tifr.res.in}

\author[A. Dhillon]{Ajneet Dhillon}

\address{Department of Mathematics, Middlesex College, University of
Western Ontario, London, ON N6A 5B7, Canada}

\email{adhill3@uwo.ca}

\author[J. Hurtubise]{Jacques Hurtubise}

\address{Department of Mathematics, McGill University, Burnside
Hall, 805 Sherbrooke St. W., Montreal, Que. H3A 0B9, Canada}

\email{jacques.hurtubise@mcgill.ca}

\author[R. A. Wentworth]{Richard A. Wentworth}

\address{Department of Mathematics, University of Maryland, College Park,
MD 20742, USA}

\email{raw@umd.edu}

\subjclass[2000]{14H60, 14D21, 14D23}

\keywords{Generalized Quot scheme, meromorphic vortices, moduli space,
Poincar\'e polynomial}

\date{}

\begin{abstract}
Let $X$ be a compact connected Riemann surface. Fix a positive integer
$r$ and two nonnegative integers $d_p$ and $d_z$. Consider all pairs of the form
$({\mathcal F}\, ,f)$, where $\mathcal F$ is a holomorphic vector bundle on $X$
of rank $r$ and degree $d_z-d_p$, and
$$
f\, :\, {\mathcal O}^{\oplus r}_X\, \longrightarrow\, {\mathcal F}
$$
is a meromorphic homomorphism which an isomorphism outside a finite subset of $X$ and has
pole (respectively, zero) of total degree $d_p$ (respectively, $d_z$). Two such pairs
$({\mathcal F}_1\, ,f_1)$ and $({\mathcal F}_2\, ,f_2)$ are called isomorphic if
there is a holomorphic
isomorphism of ${\mathcal F}_1$ with ${\mathcal F}_2$ over $X$ that takes $f_1$
to $f_2$. We construct a natural compactification of the moduli space equivalence classes 
pairs of the above type. The Poincar\'e polynomial of this compactification is computed.
\end{abstract}

\maketitle

\section{Introduction}\label{sec1}

Take a compact connected Riemann surface $X$. Fix positive integers $r$ and $d$.
Consider pairs of the form $(E\, ,f)$, where $E$ is a holomorphic vector bundle on
$X$ of rank $r$ and degree $d$, and
$$
f\,:\, {\mathcal O}^{\oplus r}_X\, \longrightarrow\, E
$$
is an ${\mathcal O}_X$--linear homomorphism which is an isomorphism outside a finite
subset of $X$. This implies that the total degree of zeros of $f$ is $d$. Two such
pairs $(E_1\, ,f_1)$ and $(E_2\, ,f_2)$ are called equivalent if there is a
holomorphic isomorphism
$$
\phi\, :\, E_1\, \longrightarrow\, E_2
$$
such that $\phi\circ f_1\,=\, f_2$. Such pairs are examples of vortices
\cite{BDW}, \cite{Br}, \cite{BR}, \cite{Ba}, \cite{EINOS}. 

For any pair $(E\, ,f)$ of the above type, consider the dual homomorphism
$$
f^*\,:\, E^*\,\longrightarrow\, ({\mathcal O}^{\oplus r}_X)^*\,=\,
{\mathcal O}^{\oplus r}_X\, .
$$
The quotient ${\mathcal O}^{\oplus r}_X/\text{image}(f^*)$ is an element of the
Quot scheme $\text{Quot}(r,d)$ that parametrizes all torsion quotients of
${\mathcal O}^{\oplus r}_X$ of degree $d$. Conversely, given any torsion quotient
$$
{\mathcal O}^{\oplus r}_X\, \stackrel{\psi}{\longrightarrow}\, T
$$
of degree $d$, consider the homomorphism
$$
{\mathcal O}^{\oplus r}_X\,=\, ({\mathcal O}^{\oplus r}_X)^*\,
\stackrel{\psi'}{\longrightarrow}\, \text{kernel}(\psi)^*
$$
induced by the inclusion $\text{kernel}(\psi)\,\hookrightarrow\,
{\mathcal O}^{\oplus r}_X$. The pair $(\text{kernel}(\psi)^*\, , \psi')$ is clearly
of the above type. Therefore, the moduli space of
equivalence classes of pairs $(E\, ,f)$ is identified with the Quot scheme
$\text{Quot}(r,d)$. 

Here we consider pairs of the form $(E\, ,f)$, where $E$ is a holomorphic vector
bundle on $X$ of rank $r$ and degree $d$, and
$$
f\,:\, {\mathcal O}^{\oplus r}_X\, \longrightarrow\, E
$$
is an ${\mathcal O}_X$--linear meromorphic homomorphism which is an isomorphism
outside a finite subset of $X$. We assume that the total degree of the poles of the
meromorphic homomorphism is $d_p$. This implies that the total degree of the zeros
of the meromorphic homomorphism is $d+d_p$. As before, two such pairs
$(E_1\, ,f_1)$ and $(E_2\, ,f_2)$ will be called equivalent if there is a holomorphic
isomorphism
$$
\phi\, :\, E_1\, \longrightarrow\, E_2
$$
such that $\phi\circ f_1\,=\, f_2$. The equivalence classes of pairs can be considered
as examples of meromorphic vortices.

We construct a natural compactification of the moduli space of these
meromorphic vortices. We compute the Poincar\'e polynomial of this compactification.

\section{Preliminaries}

Let $S$ be a scheme and $Y\,\longrightarrow\, S$ a smooth projective morphism. Given
a coherent sheaf $\sF$ on $Y$ flat over $S$ and a numerical polynomial $r(t)$, we denote
by $\quot(\sF/S, r(t))$ the Grothendieck Quot scheme over $S$ parametrizing quotients
of $\sF$ with Hilbert polynomial $r(t)$ \cite{Gr}. There is a universal exact sequence on
$\quot(\sF/S,r(t))\times_S Y$
\[
0\,\longrightarrow\, \sK^\univ_{\quot(\sF/S,r(t))}\,\longrightarrow\, \pi^*_{Y}\sF
\,\longrightarrow\,\sQ^\univ_{\quot(\sF/S,r(t))}\,\longrightarrow\, 0\, ,
\]
where $\pi_Y\,:\, \quot(\sF/S,r(t))\times_S Y\,\longrightarrow\, Y$ is the natural
projection. Often we will just drop the subscripts and write $\sK^\univ$ or ${\sQ}^\univ$
instead. This construction is well behaved with respect to pull-backs, so let us record
the following:

\begin{lemma}\label{l:qPullback}
For any morphism $g\,:\,T\,\longrightarrow\, S$, the base change
\[
\quot(g^*\sF/T,r(t))\,\cong \, \quot(\sF/S,r(t))\times_S T
\]
holds.
\end{lemma}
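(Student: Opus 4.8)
The plan is to argue via functors of points and Yoneda's lemma, showing that the two schemes in question represent the same functor. Recall that $\quot(\sF/S,r(t))$ represents the functor on the category of $S$-schemes sending $U\,\to\, S$ to the set of quotients $\pi_Y^*\sF|_{U\times_S Y}\,\twoheadrightarrow\,\sQ$ with $\sQ$ coherent, flat over $U$, and having Hilbert polynomial $r(t)$ on the fibers of $U\times_S Y\,\to\, U$, taken with respect to the relative polarization of $Y\,\to\, S$. I will check that, after restricting to $T$-schemes, this is exactly the functor represented by $\quot(g^*\sF/T,r(t))$, where $g^*\sF$ denotes the pullback of $\sF$ along $Y\times_S T\,\to\, Y$.

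First I would unwind the right-hand side. Set $Y_T\,:=\, Y\times_S T$, which is smooth and projective over $T$ and carries the pulled-back polarization. For a $T$-scheme $V$, a morphism $V\,\to\, \quot(\sF/S,r(t))\times_S T$ over $T$ is, by the universal property of the fiber product, the same datum as a morphism $V\,\to\, \quot(\sF/S,r(t))$ over $S$ (composing with $V\,\to\, T\,\to\, S$), hence the same as a quotient of $\pi_Y^*\sF$ on $V\times_S Y$, flat over $V$, with Hilbert polynomial $r(t)$. Thus $\quot(\sF/S,r(t))\times_S T$ represents the restriction of the Quot functor to the category of $T$-schemes.

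Next I would identify the left-hand side. For a $T$-scheme $V$ there is a canonical identification $V\times_T Y_T\,\cong\, V\times_S Y$ compatible with the projections, under which the pullback of $g^*\sF$ to $V\times_T Y_T$ corresponds to $\pi_Y^*\sF$ on $V\times_S Y$. Since $\sF$ is flat over $S$, its pullback $g^*\sF$ is flat over $T$, and flatness of a quotient over $V$ is unaffected by the above identification. Likewise the Hilbert polynomial condition is preserved: the fiber of $Y_T\,\to\, T$ over $t\in T$ coincides, as a polarized scheme, with the fiber of $Y\,\to\, S$ over $g(t)$, so the polynomial computed on fibers over $T$ agrees with the one computed on fibers over $S$. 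Hence $\quot(g^*\sF/T,r(t))$ represents the same functor on $T$-schemes, and Yoneda's lemma gives the asserted isomorphism, which moreover identifies the universal quotient on the left with the pullback of the universal quotient on the right.

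I do not expect a genuine obstacle here: once the functorial descriptions are written down, the argument is purely formal. The one point meriting a line of care is the bookkeeping with relative polarizations in the definition of the Hilbert polynomial — but this is immediate because base change of $Y\,\to\, S$ along $g$ carries the relative polarization to the relative polarization, and fibers are literally unchanged.
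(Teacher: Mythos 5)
Your proposal is correct and follows exactly the route the paper takes: the paper's entire proof is the one-line remark that the claim ``follows by examining the corresponding representable functors,'' and your argument simply carries out that comparison of functors of points in detail. The extra care you take with the identification $V\times_T Y_T\cong V\times_S Y$ and with the invariance of the Hilbert polynomial under base change is exactly the bookkeeping the paper leaves implicit.
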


\begin{proof}
This follows by examining the corresponding representable functors.
\end{proof}

We will mostly be interested in the case where $Y\,\longrightarrow\, S$ is a smooth,
connected and of relative dimension one, that is a relative curve, and $\sF$ is
locally free of rank $r$. Further, we will only consider torsion quotients
of rank zero and degree $d$. This  Quot scheme will be denoted by 
$\quot(\sF/S,d)$. When $r\,=\,1$ and $S$ is a point, then
\[
\quot(\sO,d)\,=\,\sym^d (Y)\, ,
\]
the $d$-th symmetric power of $Y$.

Given an positive integer $d$ by \emph{a partition of length $k>0$ of $d$} we mean
a sequence $\bP\,=\,(p_1\, ,p_2\, ,\cdots\, ,p_k)$ of non-negative integers with
$\sum_{i=1}^k p_i\,=\, d$. For such a partition define $d(\bP)\,:=\,\sum_{i=1}^k
(i-1)p_i$. We will write
\[
\sym^\bP (Y) \,=\, \sym^{p_1}(Y) \times \cdots \times \sym^{p_r}(Y)\, .
\]

\section{A relative Quot scheme}

Let $X$ be a compact connected Riemann surface. Let $\sE$ and $\sF$ be two
holomorphic vector bundles on $X$ of common rank $r$. Take a dense open subset $U\,
\subset\, X$, such that the complement $S\, :=\, X\setminus U$ is a finite set, and
take an isomorphism of coherent analytic sheaves
$$
f\, :\, \sE\vert_U\, \longrightarrow\, \sF\vert_U
$$
over $U$. This homomorphism $f$ will be called \textit{meromorphic} if there is
a positive integer $n$ such that $f$ extends to a homomorphism of coherent analytic sheaves
$$
\widehat{f} \,:\, \sE\, \longrightarrow\, \sF\otimes {\mathcal O}_X(nS)\, \supset\, \sF
$$
over $X$, where $S$ is the reduced divisor defined by the finite subset $S$.
Note that since the divisor $S$ is effective, we have $\sF\,\subset\, \sF\otimes
{\mathcal O}_X(nS)$. Therefore, $f$ is meromorphic if and only if the homomorphism $f$
is algebraic with respect to the algebraic structures on $\sE\vert_U$ and $\sF\vert_U$
given by the algebraic structures on $\sE$ and $\sF$ respectively.

Take a meromorphic homomorphism $f$ as above. We note that the extension $\widehat{f}$
is uniquely determined by $f$ because $f$ and $\widehat{f}$ coincide over $U$. The
inverse image
$$
\sE(f)\, :=\, \widehat{f}^{-1}(\sF)\, \subset\, \sE
$$
(recall that $\sF\,\subset\, \sF\otimes{\mathcal O}_X(nS)$)
is clearly independent of the choice of $n$. We note that both $\sE(f)$ and $\widehat{f}
(\sE(f))$ are holomorphic vector bundles on $X$ because they are coherent analytic
subsheaves of holomorphic vector bundles. Both of then are of rank $r$, and the restriction
\begin{equation}\label{e0}
\widehat{f}\vert_{\sE(f)}\, :\, \sE(f) \, \longrightarrow\, \widehat{f}(\sE(f))
\end{equation}
is an isomorphism of holomorphic vector bundles. Define
\begin{equation}\label{e1}
\sQ_p(f)\, :=\, \sE/\sE(f)\ \ ~ \text{ and }~ \ \ \sQ_z(f)\,:=\, \sF/(\widehat{f}(\sE(f)))
\end{equation}
(the subscripts ``$p$'' and ``$z$'' stand for ``pole'' and ``zero'' respectively). We note
that both $\sQ_p(f)$ and $\sQ_z(f)$ are torsion coherent analytic sheaves on $X$. In particular,
their supports are finite subsets of $X$. From \eqref{e1} it follows that
\begin{equation}\label{e2}
\text{degree}(\sQ_p(f))\,=\, \text{degree}(\sE)-\text{degree}(\sE(f))\ \ ~ \text{ and}
\end{equation}
$$
\text{degree}(\sQ_z(f))\,=\, \text{degree}(\sF)- \text{degree}(\widehat{f}(\sE(f)))\, .
$$

Fix positive integers $r$, $d_p$ and $d_z$. Set the domain $\sE$ to be the trivial
vector bundle ${\mathcal O}^{\oplus r}_X$ of rank $r$. Consider all triples of the
form $(\sF\, ,U\, , f)$, where
\begin{itemize}
\item $\sF$ is a holomorphic vector bundle on $X$ of rank $r$,

\item $U$ is the complement of a finite subset of $X$, and

\item $f\, :\, {\mathcal O}^{\oplus r}_X\vert_U\,=\,
{\mathcal O}^{\oplus r}_U\, \longrightarrow\, \sF\vert_U$
is a meromorphic homomorphism such that
$$
\text{degree}(\sQ_p(f))\,=\, d_p \ \ ~ \text{ and } ~ \ \ \text{degree}(\sQ_z(f))\,=\, d_z\, .
$$
\end{itemize}
Since $\widehat{f}\vert_{\sE(f)}$ in \eqref{e0} is an isomorphism, from \eqref{e2} we
conclude that
\begin{equation}\label{e4}
\text{degree}(\sF)\,=\, d_z-d_p+ \text{degree}({\mathcal O}^{\oplus r}_X)\,=\,
d_z-d_p\, .
\end{equation}
Two such triples $(\sF_1\, ,U_1\, ,f_1)$ and $(\sF_2\, ,U_2\, , f_2)$ will be called
\textit{equivalent} if there is a holomorphic isomorphism of vector bundles over $X$
$$
\beta\, :\, \sF_1\, \longrightarrow\, \sF_2
$$
such that
$$\beta\circ (f_1\vert_{U_1\cap U_2})\,=\, f_2\vert_{U_1\cap U_2}\, .
$$
Therefore, the equivalence class of $(\sF\, ,U\, ,f)$ depends only on
$(\sF\, ,f)$ and it is independent of $U$. More precisely, $(\sF\, ,U\, ,f)$
is equivalent to $(\sF\, ,W\, ,f\vert_W)$ for every $W\, \subset\, U$ such that
the complement $U\setminus W$ is a finite set.

Let
\begin{equation}\label{q0}
{\rQ}^0\, =\, {\rQ}^0_X(r, d_p,d_z)
\end{equation}
be the space of all equivalence classes of triples of the above form. We will embed
${\rQ}^0$ as a Zariski open subset of a smooth complex projective variety.

Take any triple $(\sF\, ,U\, , f)$ as above that is represented by a point of ${\rQ}^0$.
Consider the short exact sequence
\begin{equation}\label{e3}
0\,\longrightarrow\, \sE(f)\, :=\, \text{kernel}(q_p)\, \longrightarrow\, \sE\,=\,
{\mathcal O}^{\oplus r}_X\, \stackrel{q_p}{\longrightarrow}\, \sQ_p(f)\,\longrightarrow\, 0\, ,
\end{equation}
where $q_p$ denotes the projection to the quotient in \eqref{e1}. We also have
$$
\sE(f)\,=\, \widehat{f}(\sE(f))\, \hookrightarrow\, \sF
$$
(recall that $\widehat{f}\vert_{\sE(f)}$ in \eqref{e0} is an isomorphism). Let
\begin{equation}\label{e-1}
0\, \longrightarrow\, \sF^*\, \longrightarrow\, \sE(f)^*
\end{equation}
be the dual of the above inclusion of $\sE(f)$ in $\sF$. From \eqref{e3} we have
$\text{degree}(\sE(f)^*)\,=\, \text{degree}(\sQ_p(f))\,=\, d_p$.
Therefore, from \eqref{e4} it follows that
$$
\text{degree}(\sE(f)^*/\sF^*)\,=\, \text{degree}(\sE(f)^*)-\text{degree}(\sF^*)\,=\,
d_p+ d_z-d_p\,=\, d_z
$$
as $\text{degree}(\sF^*)\,=\, -\text{degree}(\sF)$.
These imply that we can recover the equivalence class of $(\sF\, ,f)$ once we know
the following two:
\begin{itemize}
\item the torsion quotient $\sQ_p(f)$ of ${\mathcal O}^{\oplus r}_X$ of degree
$d_p$, and

\item the torsion quotient $\sE(f)^*/\sF^*$ of $\sE(f)^*$ of degree $d_z$.
\end{itemize}
(It should be clarified that ``knowing the torsion quotient $\sQ_p(f)$'' means
knowing the sheaf $\sQ_p(f)$ along with the surjective homomorphism
${\mathcal O}^{\oplus r}_X\,\longrightarrow\, \sQ_p(f)$; similarly
``knowing the torsion quotient $\sE(f)^*/\sF^*$'' means knowing the sheaf $\sE(f)^*/\sF^*$
along with the surjective homomorphism from $\sE(f)^*$ to it.)
Indeed, once we know $\sQ_p(f)$, we know the kernel $\sE(f)$ and hence know $\sE(f)^*$; if
we know the quotient $\sE(f)^*/\sF^*$, then we know the subsheaf $\sF^*$ of $\sE(f)^*$.
The dual of this inclusion $\sF^*\, \hookrightarrow\, \sE(f)^*$, namely the homomorphism
$$
\sE(f)\,\longrightarrow\, \sF\, ,
$$
gives the meromorphic homomorphism $f$. In other words, we have the diagram
$$
\begin{matrix}
&& 0 &&&&&&\\
&& \Big\downarrow &&&&&&\\
0& \longrightarrow & {\mathcal E}(f) & \longrightarrow & {\mathcal O}^{\oplus r}&
\longrightarrow & {\mathcal Q_p}& \longrightarrow & 0\\
&& ~\Big\downarrow \widehat{f}&&&&&&\\
&& {\mathcal F} &&&&&&\\
&& \Big\downarrow &&&&&&\\
&& {\mathcal Q_z}&&&&&&\\
&& \Big\downarrow &&&&&&\\
&& 0 &&&&&&
\end{matrix}
$$

Let ${\quot}(r,d_p)$ be the Quot scheme parametrizing the torsion quotients
of ${\mathcal O}^{\oplus r}_X$ of degree $d_p$. We have the tautological short exact
sequence of coherent analytic sheaves on $X\times {\mathcal Q}(r,d_p)$
\begin{equation}\label{e-2}
0 \, \longrightarrow \, {\mathcal K}^\univ \,\longrightarrow \, p^*_X{\mathcal O}^{\oplus r}_X
\,\longrightarrow \, \sQ^\univ\,\longrightarrow \, 0\, ,
\end{equation}
where $p_X$ is the projection of $X\times {\quot}(r,d_p)$ to $X$. We write $\sK=\sK^\univ$.
 Now consider
the dual vector bundle
$$
{{\mathcal K}}^*\, \longrightarrow\, X\times {\quot}(r,d_p)\,
\stackrel{p_Q}{\longrightarrow}\, {\quot}(r,d_p)\, ,
$$
where $p_Q$ is the natural projection.
Using $p_Q$, we will consider $\sK^*$ as a family of vector bundles on $X$
parametrized by ${\quot}(r,d_p)$. For any point $y\, \in\, {\quot}(r,d_p)$, the
vector bundle $\sK^*\vert_{X\times\{y\}}$ over $X$ will be denoted by
$\sK^*_{\vert y}$. Let
\begin{equation}\label{e5}
\varphi\, :\, {\quot}(r,d_p,d_z)\, :=\, {\quot}
(\sK^*/\quot(r,d_p), d_z)\, \longrightarrow\, {\mathcal Q}(r,d_p)
\end{equation}
be the relative Quot scheme over ${\quot}(r,d_p)$,
for the family $\sK^*$, parametrizing the torsion
quotients of degree $d_z$. Therefore, for any point $y\,\in\, {\quot}(r,d_p)$,
the fiber $\varphi^{-1}(y)$ is the Quot scheme parametrizing the torsion quotients
of degree $d_z$ of the vector bundle ${\mathcal K}^*_{\vert y}$.

Both ${\quot}(r,d_p)$ and the fibers of $\varphi$ are irreducible smooth projective
varieties. The morphism $\varphi$ is smooth.
Therefore, the projective variety ${\quot}(r,d_p,d_z)$ is
irreducible and smooth.

Consider ${\rQ}^0$ defined in \eqref{q0}. We have a map
$$
\eta'\, :\, {\rQ}^0\,\longrightarrow\, {\quot}(r,d_p)
$$
that sends any triple $(\sF\, ,U\, , f)\, \in\, {\rQ}_0$ to the point representing
the quotient $Q_p(f)$ in \eqref{e3}. Let
\begin{equation}\label{e6}
\eta\, :\, {\rQ}^0\,\longrightarrow\, {\quot}(\sK^*, d_z)\,=\,
{\quot}(r,d_p,d_z)
\end{equation}
be the map that sends any point $\alpha\, =\, (\sF\, ,U\, , f)\, \in\, {\rQ}_0$
to the point of $\varphi^{-1}(\eta'(\alpha))$ that represents the quotient
$\sE(f)^*/\sF^*$ in \eqref{e-1}. This map $\eta$ is injective because, as observed earlier,
the equivalence class of the pair $(\sF\, ,f)$ can be recovered from the quotient $\sQ_p(f)$ of
${\mathcal O}^{\oplus r}_X$ and the quotient $\sE(f)^*/\sF^*$ of $\sE(f)^*$.
The image of $\eta$ is clearly a Zariski open subset of $\quot(r,d_p,d_z)$.

Let $\bigwedge^r {\mathcal K}\, \longrightarrow\,\bigwedge^r p^*_X{\mathcal O}^{\oplus r}_X
\,=\, p^*_X{\mathcal O}_X$
be the $r$-th exterior power of the homomorphism in \eqref{e-2}. Considering it as a
family of subsheaves of ${\mathcal O}_X$ of degree $-d_p$ parametrized by ${\quot}(r,d_p)$, 
we have the corresponding classifying morphism
$$
\delta'_1\, :\, {\quot}(r,d_p)\, \longrightarrow\, {\quot}(1, d_p)
\,=\, \text{Sym}^{d_p}(X)\, .
$$
Let
\begin{equation}\label{e8}
\delta_1\,:=\, \delta'_1\circ\varphi\, :\, {\quot}(r,d_p,d_z)
\,=:\, Q\,\longrightarrow\, \text{Sym}^{d_p}(X)
\end{equation}
be the composition, where $\varphi$ is constructed in \eqref{e5}. Next, consider the
tautological subsheaf
$$
{\mathcal S}\, \hookrightarrow\, (\text{Id}_X\times \varphi)^*{\mathcal K}^*
$$
on $X\times \rQ$. Let $\bigwedge^r {\mathcal S}\, \hookrightarrow\,\bigwedge^r
(\text{Id}_X\times \varphi)^*\sK^*$ be the $r$-th exterior power of the above
inclusion. Let
\begin{equation}\label{e9}
\delta_2\,:\, {\rQ}\,\longrightarrow\, \text{Sym}^{d_z}(X)
\end{equation}
be the morphism that sends any $y\, \in\, \rQ$ to the scheme theoretic
support of the quotient sheaf
$$
(\bigwedge\nolimits^r(\text{Id}_X\times \varphi)^*{\sK}^*_{\vert \varphi(y)})/
(\bigwedge\nolimits^r {\mathcal S}\vert_{X\times\{y\}})\, \longrightarrow\, X\, .
$$
Now define the morphism
\begin{equation}\label{e10}
\delta\,:=\, (\delta_1\, ,\delta_2)\,:\, {\quot}(r,d_p,d_q)\,=\,
{\rQ}\,\longrightarrow\, \text{Sym}^{d_p}(X)\times\text{Sym}^{d_z}(X)\, ,
\end{equation}
where $\delta_1$ and $\delta_2$ are constructed in \eqref{e8} and \eqref{e9}
respectively. It can be shown that $\delta$ is surjective. In fact, in Section
\ref{se3} we will construct, and use, a section of $\delta$.

\begin{remark}
Let ${\mathcal M}_X(r, d_z-d_p)$ denote the moduli stack of vector bundles on $X$
of rank $r$ and degree $d_z-d_p$. Since there is a universal bundle over
$X\times {\quot}(r,d_p,d_q)$, we get a morphism
$$
{\quot}(r,d_p,d_q)\,\longrightarrow\, {\mathcal M}_X(r, d_z-d_p)\, .
$$
\end{remark}

\section{Fundamental group of ${\quot}(r,d_p,d_z)$}\label{se3}

\begin{proposition}\label{prop1}
The homomorphism between fundamental groups induced by the morphism $\delta$ in
\eqref{e10} is an isomorphism.
\end{proposition}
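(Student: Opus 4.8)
The plan is to reduce the assertion to an explicit description of a general fibre of $\delta$ and then to apply the homotopy exact sequence attached to a surjective morphism. Write $Q:=\quot(r,d_p,d_z)$ and $S:=\sym^{d_p}(X)\times\sym^{d_z}(X)$; both are smooth irreducible complex projective varieties, and $\delta\colon Q\to S$ is surjective --- indeed it admits a section. I will use the standard fact that for a surjective morphism $f\colon Y\to Z$ of irreducible complex projective varieties with $Y$ smooth and with connected general fibre $F$, the sequence
$$
\pi_1(F)\ \longrightarrow\ \pi_1(Y)\ \longrightarrow\ \pi_1(Z)\ \longrightarrow\ 1
$$
is exact, the maps being induced by the inclusion $F\hookrightarrow Y$ and by $f$. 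Granting this, it is enough to prove that a general fibre of $\delta$ is connected and simply connected; then $\pi_1(F)=1$, and exactness forces $\delta_*\colon\pi_1(Q)\to\pi_1(S)$ to be an isomorphism. (The section of $\delta$ by itself already gives the surjectivity of $\delta_*$; the real content of the fibre computation is the injectivity.)

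Next I would identify a general fibre. Let $V\subset S$ be the locus of pairs $(D_p,D_z)$ for which $D_p$ and $D_z$ are both reduced effective divisors on $X$ with disjoint supports; this is a dense Zariski open subset. Fix $(D_p,D_z)\in V$, say $D_p=x_1+\cdots+x_{d_p}$ and $D_z=y_1+\cdots+y_{d_z}$ with the $d_p+d_z$ points pairwise distinct. By construction a point of $Q$ lying over $(D_p,D_z)$ is the datum of a point $y\in\quot(r,d_p)$, that is, a torsion quotient $q_p\colon\sO^{\oplus r}_X\twoheadrightarrow\sQ_p$ of degree $d_p$, together with a degree-$d_z$ torsion quotient of $\sK^*_{\vert y}=(\ker q_p)^*$, subject to $\delta_1=D_p$ and $\delta_2=D_z$. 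Now $\delta_1$ records the determinant divisor of $\ker q_p\subset\sO^{\oplus r}_X$, a divisor of degree $\deg\sQ_p=d_p$ whose multiplicity at a point equals the length of $\sQ_p$ there; hence the requirement that this divisor be the reduced divisor $D_p$ forces $\sQ_p\cong\bigoplus_i\mathbb{C}_{x_i}$, and such quotients are parametrized by $\prod_i\mathbb{P}((\mathbb{C}^r)^*)\cong(\mathbb{P}^{r-1})^{d_p}$. For any such $q_p$ the subsheaf $\ker q_p$ coincides with $\sO^{\oplus r}_X$ on $X\setminus D_p$, hence on a neighbourhood of each $y_j$; thus $\sK^*_{\vert y}$ is canonically trivial near $D_z$, and the same reasoning shows that the degree-$d_z$ torsion quotients of $\sK^*_{\vert y}$ whose determinant divisor is the reduced divisor $D_z$ are parametrized by $\prod_j\mathbb{P}((\mathbb{C}^r)^*)\cong(\mathbb{P}^{r-1})^{d_z}$, canonically and independently of $y$. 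Therefore $\delta^{-1}(D_p,D_z)\cong(\mathbb{P}^{r-1})^{d_p}\times(\mathbb{P}^{r-1})^{d_z}$, which is connected and simply connected, as needed.

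The step I expect to be the main obstacle is the passage from a general fibre back to all of $Q$. The bad locus $S\setminus V$ --- where some divisor fails to be reduced, or where $D_p$ and $D_z$ share a point --- is a union of divisors, hence of codimension one in $S$; so one cannot simply excise it and argue with $\pi_1(V)$ and $\pi_1(\delta^{-1}(V))$, since $\pi_1(V)\to\pi_1(S)$ has a large kernel. It is exactly this phenomenon that the general homotopy exact sequence absorbs: the additional loops in $\pi_1(\delta^{-1}(V))$ that become trivial in $\pi_1(Q)$ are precisely those that can be homotoped into the degenerate fibres lying over $S\setminus V$. If one preferred to avoid quoting that result, one could instead use Ehresmann's fibration theorem to see that $\delta$ restricts to a locally trivial fibration over $V$ with fibre $(\mathbb{P}^{r-1})^{d_p+d_z}$, conclude $\pi_1(\delta^{-1}(V))\cong\pi_1(V)$, and then track directly how the finitely many irreducible components of $\delta^{-1}(S\setminus V)$ --- each of which is rationally fibred over its image --- contribute to $\pi_1(Q)$; but invoking the general exact sequence is the shorter route.
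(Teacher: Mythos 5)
Your identification of the general fibre of $\delta$ as $(\mathbb{P}^{r-1})^{d_p}\times(\mathbb{P}^{r-1})^{d_z}$, hence connected and simply connected, is correct and agrees with the paper's computation, and the surjectivity of $\delta_*$ via the section is fine. The gap is the ``standard fact'' you invoke: for a surjective morphism $f\colon Y\to Z$ of irreducible smooth projective varieties with connected general fibre $F$, the sequence $\pi_1(F)\to\pi_1(Y)\to\pi_1(Z)\to 1$ is \emph{not} exact in general. The obstruction is multiple fibres over divisors in $Z$. A concrete counterexample: let $Y=(E_1\times E_2)/\mathbb{Z}_2$ be a bielliptic surface, fibred over $Z=E_2/\mathbb{Z}_2\cong\mathbb{P}^1$ with general fibre $F\cong E_1$; all fibres are connected and $\pi_1(Z)=1$, yet the normal closure of the image of $\pi_1(F)$ in $\pi_1(Y)$ has infinite index --- the quotient is the orbifold fundamental group of $\mathbb{P}^1$ with four $\mathbb{Z}_2$-points. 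The correct general statements (Nori's lemma, Koll\'ar's exact sequence) either replace $\pi_1(Z)$ by an orbifold fundamental group or require that every fibre over a codimension-one point of $Z$ contain a reduced irreducible component. So your key step appeals to a false lemma, and your fallback sketch (Ehresmann over $V$ plus ``tracking'' the components of $\delta^{-1}(S\setminus V)$) is not carried out and is precisely where the difficulty sits, since $S\setminus V$ has codimension one.

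The repair is available and is essentially what the paper does: the section $\theta$ kills the multiple-fibre obstruction, and in fact lets one bypass any homotopy exact sequence over the full base. Concretely: (i) $\pi_1(\delta^{-1}(V))\to\pi_1(Q)$ is surjective because $\delta^{-1}(V)$ is a dense Zariski open subset of the smooth variety $Q$; (ii) over $V$ the map $\delta$ is a fibration with simply connected fibres, so $\pi_1(\delta^{-1}(V))\to\pi_1(V)$ is an isomorphism, with inverse induced by $\theta\vert_V$; (iii) combining (i) and (ii), $\theta_*\colon\pi_1(S)\to\pi_1(Q)$ is surjective; (iv) since $\delta_*\circ\theta_*=\mathrm{id}$ and $\theta_*$ is surjective, $\delta_*$ is injective, and it is surjective because $\delta$ has a section. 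Either run this argument, or quote a correct version of the fibration exact sequence and verify its hypotheses using the section $\theta$.
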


\begin{proof}
We will first construct a section of $\delta$. Let
$$
D(d_p)\, \subset\, X\times \text{Sym}^{d_p}(X)
$$
be the divisor consisting of all $(x\, , \{y_1\, ,\cdots\, , y_{_{d_p}}\})$ such that
$x\, \in\, \{y_1\, ,\cdots\, , y_{_{d_p}}\}$. Then the subsheaf
$$
{\mathcal O}_{X\times \text{Sym}^{d_p}(X)}(-D(d_p))\oplus
{\mathcal O}^{\oplus r-1}_{X\times \text{Sym}^{d_p}(X)}\, \subset\,
{\mathcal O}^{\oplus r}_{X\times \text{Sym}^{d_p}(X)}
$$
produces a classifying morphism
\begin{equation}\label{t1}
\theta_1\, :\, \text{Sym}^{d_p}(X)\, \longrightarrow\, {\quot}(r,d_p)\, .
\end{equation}
Let $\xi_1$ (respectively, $\xi_2$) denote the projection of
$\text{Sym}^{d_p}(X)\times\text{Sym}^{d_z}(X)$ to $\text{Sym}^{d_p}(X)$ (respectively,
$\text{Sym}^{d_z}(X)$). Like before, $D(d_z)\, \subset\, X\times \text{Sym}^{d_p}(X)$
be the divisor consisting of all $(x\, , \{y_1\, ,\cdots\, , y_{_{d_z}}\})$ such that
$x\, \in\, \{y_1\, ,\cdots\, , y_{_{d_z}}\}$. The subsheaf
$$
((\text{Id}_X\times\xi_1)^*({\mathcal O}_{X\times \text{Sym}^{d_p}(X)}(D(d_p)))\otimes
(\text{Id}_X\times\xi_2)^*({\mathcal O}_{X\times \text{Sym}^{d_z}(X)}(-D(d_z))))
$$
$$
\oplus ({\mathcal O}^{\oplus r-1}_{X\times \text{Sym}^{d_p}(X)\text{Sym}^{d_z}(X)})^*
\, \subset\,
(\text{Id}_X\times\xi_1)^*({\mathcal O}_{X\times \text{Sym}^{d_p}(X)}(-D(d_p))
\oplus {\mathcal O}^{\oplus r-1}_{X\times \text{Sym}^{d_p}(X)})^*
$$
produces a classifying morphism
\begin{equation}\label{theta}
\theta\, :\, \text{Sym}^{d_p}(X)\times\text{Sym}^{d_z}(X)\,\longrightarrow\,
{\quot}(r,d_p,d_q)\, .
\end{equation}
We note that $\varphi\circ\theta\,=\,\theta_1$, where $\varphi$ and $\theta_1$ are the
morphisms constructed in \eqref{e5} and \eqref{t1} respectively.

It is straightforward to check that
\begin{equation}\label{e11}
\delta\circ\theta\,=\, \text{Id}_{\text{Sym}^{d_p}(X)\times\text{Sym}^{d_z}(X)}\, ,
\end{equation}
where $\delta$ is constructed in \eqref{e10}. In view of this section $\theta$, we
conclude that the induced homomorphism between fundamental groups
$$
\delta_*\, :\, \pi_1({\quot}(r,d_p,d_q))\,\longrightarrow\, \pi_1(
\text{Sym}^{d_p}(X)\times\text{Sym}^{d_z}(X))
$$
is surjective (the base points of fundamental groups are suppressed in the notation).

Let
\begin{equation}\label{u}
U\, \subset\, \text{Sym}^{d_p}(X)\times\text{Sym}^{d_z}(X))
\end{equation}
be the Zariski open subset consisting of all
$$
(x\, ,y)\,=\, (\{x_1\, ,\cdots\, ,x_{_{d_p}}\}\, ,\{y_1\, ,\cdots\, ,y_{_{d_z}}\})
\,\in\, \text{Sym}^{d_p}(X)\times\text{Sym}^{d_z}(X))
$$
such that the $d_p+d_z$ points $\{x_1\, ,\cdots\, ,x_{_{d_p}}\, ,y_1\, ,\cdots\, ,
y_{_{d_z}}\}$ are all distinct, equivalently, the effective divisor $x+y$ is reduced. Let
\begin{equation}\label{ep}
\theta_0\, :=\, \theta\vert_U :\, U\,\longrightarrow\, {\quot}(r,d_p,d_z)
\end{equation}
be the restriction of the map $\theta$ in \eqref{theta}. Also, consider the restriction
\begin{equation}\label{vpp}
\delta_0\, :=\, \delta\vert_{\delta^{-1}(U)}\, :\, \delta^{-1}(U)\,
\longrightarrow\, U\, .
\end{equation}
Every fiber of $\delta_0$ is identified with $({\mathbb P}^{r-1}_{\mathbb C})^{d_p}\times
(P^{r-1}_{\mathbb C})^{d_z}$, where ${\mathbb P}^{r-1}_{\mathbb C}$ is the projective
space parametrizing the hyperplanes in ${\mathbb C}^r$ and $P^{r-1}_{\mathbb C}$ is the
projective space parametrizing the lines in ${\mathbb C}^r$ (so
$P^{r-1}_{\mathbb C}$ parametrizes the hyperplanes in $({\mathbb C}^r)^*$). From the
homotopy exact sequence associated to $\delta_0$ it follows that the induced
homomorphism of fundamental groups
$$
\delta_{0,*}\, :\, \pi_1(\delta^{-1}(U))\,\longrightarrow\, \pi_1(U)
$$
is an isomorphism. The variety $\quot(r,d_p,d_z)$ is smooth, and $\delta^{-1}(U)$ is a
nonempty Zariski open subset of it. Therefore, the homomorphism
$$
\iota_*\, :\, \pi_1(\delta^{-1}(U))\,\longrightarrow\, \pi_1({\quot}(r,d_p,d_z))
$$
induced by the inclusion $\iota\, :\, \varphi^{-1}(U)\,\hookrightarrow\,
\quot (r,d_p,d_z)$ is surjective. Since $\delta_{0,*}$ is an isomorphism, this
implies that
the homomorphism
$$
\theta_{0,*}\, :\, \pi_1(U)\,\longrightarrow\, \pi_1({\quot}(r,d_p,d_z))
$$
induced in $\theta_0$ in \eqref{ep} is surjective. Since $\theta_0$ extends to $\theta$,
this immediately implies that the homomorphism
$$
\theta_*\, :\, \pi_1(\text{Sym}^{d_p}(X)\times\text{Sym}^{d_z}(X))\,
\longrightarrow\, \pi_1({\quot}(r,d_p,d_z)
$$
induced in $\theta$ in \eqref{theta} is surjective. Since $\theta_*$ is surjective, and
the composition $\delta_*\circ\theta_*$ is injective (see \eqref{e11}) we
conclude that $\delta_*$ is also injective.
\end{proof}

\section{Cohomology of $\quot(r,d_p,d_z)$}

\subsection{Generalization of a theorem of Bifet}

Let $S_1\, ,S_2\, ,\cdots\, , S_k$ be a smooth connected projective varieties
over $\mathbb C$. Fix some line bundles $\sL_i$ on $S_i\times X$ of relative degree $d_i$
over $S_i$. In other words
\[\deg (\sL_i\vert_{s\times X}) \,=\, d_i\] for each point $s\,\in\, S_i$.
Set $S\,=\,S_1\times \cdots \times S_k$. Let
$$
\pi_{S_i\times X}\, :\, S\times X \, \longrightarrow\,S_i\times X
$$
be the natural projection. Define
$$
\widetilde{\sL}_i\, :=\, \pi_{S_i\times X}^*\sL_i\, .
$$
Let
$$
\phi \, :\, \quot(\oplus_i \widetilde{\sL}_i/S, d)\, \longrightarrow\, S
$$
be the relative Quot scheme that parametrizes the torsion quotients of degree $d$.
So for any $s\, =\, (s_1\, ,\cdots\ , s_k)\, \in\, S$, the fiber $\phi^{-1}(s)$
parametrizes the torsion quotients of $\oplus_{i=1}^k \sL_i\vert_{s_i\times X}$
of degree $d$. By deformation theory, $\phi$ is a smooth morphism
of relative dimension $kd$, so $\quot(\oplus_i \widetilde{\sL}_i/S, d)$ is smooth
of dimension $\dim(S)+kd$. The torus $\gm^k$ acts on $\quot(\oplus_i
\widetilde{\sL}_i/S, d)$ via its action on $\bigoplus_{i=1}^k \widetilde{\sL}_i$. 

For any positive integer $p$, let $\quot({\sL}_i/S_i,p)\,\longrightarrow
\, S_i$ denote the relative Quot scheme parametrizing the torsion quotients of
${\sL}_i/S_i$ of degree $p$. So the fiber of $\quot({\sL}_i/S_i,p)$ over any
$s\, \in\, S_i$ parametrizes the torsion quotients of ${\sL}_i\vert_{s\times X}$
of degree $p$.

\begin{proposition}
There is a bijection between the partitions $\bP\, =\, (p_1\, ,p_2\, ,\cdots\, , p_k)$
of $d$ of length $k$ and the connected components of the fixed point loci of the
$\gm^k$ action on $\quot(\oplus_i \widetilde{\sL}_i/S, d)$. The component corresponding
to the partition $\sum_{i=1}^k p_i \,=\,d$ is the product of Quot schemes
\[
\quot(\sL_\bullet/S,\bP)\,:=\,\quot({\sL}_1/S_1,p_1)\times
\quot({\sL}_2/S_2, p_2) \times \ldots \times \quot({\sL}_k/S_k,p_k)
\]
with the obvious structure morphism to $S$.
\end{proposition}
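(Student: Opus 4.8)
The plan is to describe the $\gm^k$-fixed locus functorially. For a $\mathbb C$-scheme $T$, a $T$-point of $\quot(\oplus_i\widetilde{\sL}_i/S,d)$ is a surjection $q\colon(\oplus_i\widetilde{\sL}_i)_T\twoheadrightarrow\mathcal T$ with $\mathcal T$ flat over $T$ and fibrewise torsion of degree $d$, and (since flatness of $\mathcal T$ forces $\ker(q)$ to be $T$-flat with formation commuting with base change) the subsheaf $\mathcal K:=\ker(q)\subset(\oplus_i\widetilde{\sL}_i)_T$ carries this data. The torus acts by rescaling the summands, so $[q]$ is a $T$-valued fixed point precisely when $\mathcal K$ is a $\gm^k$-equivariant subsheaf, with $T$ given the trivial action. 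I would then invoke the structure theory of $\gm^k$-equivariant quasi-coherent sheaves on a scheme with trivial action: any such sheaf splits canonically into weight spaces indexed by the character lattice $\mathbb Z^k$. Since $(\oplus_i\widetilde{\sL}_i)_T$ has only the weights $e_1,\dots,e_k$, with $e_i$-eigensheaf $(\widetilde{\sL}_i)_T$, an equivariant subsheaf must satisfy $\mathcal K=\bigoplus_i\mathcal K_i$ with $\mathcal K_i\subset(\widetilde{\sL}_i)_T$, and correspondingly $\mathcal T=\bigoplus_i\bigl((\widetilde{\sL}_i)_T/\mathcal K_i\bigr)$.

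Next I would check that this splitting is compatible with the Quot conditions: a direct summand of a $T$-flat sheaf is $T$-flat, a quotient of a fibrewise-torsion sheaf is fibrewise torsion, and in a flat family of torsion sheaves on a curve the fibrewise degree is locally constant on $T$. Writing $p_i$ for the degree of $(\widetilde{\sL}_i)_T/\mathcal K_i$, the tuple $\bP=(p_1,\dots,p_k)$ is thus locally constant with $\sum_ip_i=d$, so the fixed-point scheme breaks up into open-and-closed pieces indexed by the length-$k$ partitions $\bP$ of $d$. On the $\bP$-piece a $T$-point is, by the splitting above, precisely a collection of $T$-flat torsion quotients $(\widetilde{\sL}_i)_T\twoheadrightarrow(\widetilde{\sL}_i)_T/\mathcal K_i$ of fibrewise degree $p_i$, that is, a $T$-point of the fibre product over $S$ of the relative Quot schemes $\quot(\widetilde{\sL}_i/S,p_i)$.

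It then remains to identify this fibre product with $\quot(\sL_\bullet/S,\bP)$. Since $\widetilde{\sL}_i=\pi_{S_i\times X}^*\sL_i$ is pulled back from $S_i\times X$, Lemma~\ref{l:qPullback} gives $\quot(\widetilde{\sL}_i/S,p_i)\cong\quot(\sL_i/S_i,p_i)\times_{S_i}S$. A direct functor-of-points check then shows that the fibre product over $S=\prod_iS_i$ of the schemes $\quot(\sL_i/S_i,p_i)\times_{S_i}S$ is isomorphic, as an $S$-scheme, to the product $\prod_{i=1}^k\quot(\sL_i/S_i,p_i)$ equipped with the product of the structure maps to the $S_i$; this is exactly $\quot(\sL_\bullet/S,\bP)$ with its obvious structure morphism. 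Hence the fixed locus equals $\coprod_{\bP}\quot(\sL_\bullet/S,\bP)$. Finally, to see these are the connected components, I would use that each $\quot(\sL_i/S_i,p_i)\to S_i$ is smooth and surjective with fibres $\sym^{p_i}(X)$, which are nonempty and connected; a smooth surjective morphism with connected fibres over a connected base has connected total space, so each $\quot(\sL_\bullet/S,\bP)$ is connected, and the pieces for distinct $\bP$ are disjoint.

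The step I expect to be the main obstacle is making the first one precise over an arbitrary base: pinning down the scheme structure of the $\gm^k$-fixed locus and proving that a $T$-valued point is fixed if and only if the kernel of the corresponding quotient is equivariant, together with the weight-space decomposition of equivariant sheaves in this relative setting. On geometric points all of this is transparent. Since the fixed locus of a torus acting on the smooth variety $\quot(\oplus_i\widetilde{\sL}_i/S,d)$ is again smooth, hence reduced, an alternative would be to carry out the identification on geometric points and then deduce the scheme-theoretic statement from smoothness of both sides.
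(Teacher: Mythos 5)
Your argument is correct, and it is essentially the paper's own proof written out in full: the paper simply defers to the argument of Lemme~1 in \cite{Bifet}, which is precisely the identification of fixed $T$-points with $\gm^k$-equivariant kernels followed by the weight-space decomposition of an equivariant subsheaf of $\oplus_i\widetilde{\sL}_i$, and your relative version (including the reduction to $\quot(\sL_i/S_i,p_i)$ via Lemma~\ref{l:qPullback} and the connectedness check) supplies exactly the details the citation leaves implicit.
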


\begin{proof}
On applies the argument used to prove Lemme 1 in \cite{Bifet}.
\end{proof}

As all schemes and morphisms are assumed to be projective it is possible to 
choose a one parameter subgroup
\[
\gm\,\hookrightarrow \,\gm^k
\]
so that 
\[
\quot(\oplus_i \widetilde{\sL}_i/S,d)^{\gm} \,=\, \quot(\oplus_i
\widetilde{\sL}_i/S, d)^{\gm^k}\, .
\]
Further, the above one-parameter subgroup can be chosen to be given by an
increasing sequence of weights $\lambda_1\,<\,\lambda_2\,<\,\ldots \,<\, \lambda_k$.

There is an induced action of $\gm$ on the tangent space at a fixed point $x$. 
The action preserves the normal space to the fixed point locus and we wish to
describe the subspace of positive weights.

Take a partition $\bP\,= \,(p_1\, ,\cdots \, ,p_k)$ of $D$. As before, let
$$
\quot(\sL_\bullet/S,\bP) \, \subset\, \quot(\oplus_i
\widetilde{\sL}_i/S,d)^{\gm^k}
$$
be the connected component corresponding to $\bP$. For a point $x\, \in\,
\quot(\sL_\bullet/S,\bP)$, its image in $S_i$ will be denoted by $x_i$. The line
bundle $\sL_i\vert_{x_i\times X}$ on $X$ will be denoted by $\sL^x_i$.
The point $x_i$ is given by the exact sequence
\[
0\,\longrightarrow\, \sL^x_i\otimes {\mathcal O}_X(-D_i)\,
\longrightarrow\, \sL^x_i\,\longrightarrow\,\sO_{D_i}
\,\longrightarrow\, 0
\]
where $D_i$ is an effective divisor on $X$ with $\deg D_i\,= \,p_i$.
The relative tangent bundle for the projection $\phi$ is
$$
T_x \quot(\oplus_i \widetilde{\sL}_i/S,d)/S\,=\, 
\bigoplus_{i,j=1}^k \Hom(\sL^x_i\otimes {\mathcal O}_X(-D_i)\, ,\sO_{D_j})\, .
$$
On the other hand, the relative tangent space to the fixed point locus
$\quot(\sL_\bullet/S,\bP)$ is
\[
T_x \quot(\sL_\bullet/S,\bP)/S \,=\, \bigoplus_{i=1}^k
\Hom(\sL^x_i\otimes {\mathcal O}_X(-D_i)\, ,\sO_{D_i})\, .
\]
Consequently, the normal bundle $N$ to $\quot(\sL_\bullet/S,\bP)\,\subset\,
\quot(\oplus_i \widetilde{\sL}_i/S,d)$ is
\[
N_x \,=\,\bigoplus_{i\ne j} \Hom(\sL^x_i\otimes {\mathcal O}_X(-D_i\, , \sO_{D_j})\, .
\]
Also, the subspace of positive weights is 
\[
N^+_x \,=\, \bigoplus_{i< j} \Hom(\sL^x_i\otimes {\mathcal O}_X(-D_i\, ,\sO_{D_j})
\]
because the torus acts on $\Hom(\sL^x_i\otimes {\mathcal O}_X(-D_i\, ,\sO_{D_j})$
with weight $\lambda_j-\lambda_i$. It follows that 
\[
d(\bP)\,:=\, \dim N^+_x \,= \,\sum_{i=1}^k (i-1)p_i\, . 
\]

\begin{proposition}\label{p:bifet}
The Quot schemes for line bundles $$\quot(\sL_i/S_i, p_i)\,=\,
\quot(\sO/S_i,p_i)\,=\,\sym^{p_i}(X)\times S_i\, .$$
The Poincar\'e polynomial of $\quot(\oplus_i\widetilde{\sL}_i/S, d)$ is given by
\begin{eqnarray*}
P(\quot(\oplus_i\widetilde{\sL}_i/S,d),t) & = & \sum_{\bP} t^{2d(\bP)}
P(\quot(\sL_i/S_i,p_i) ,t) \\
& = & \sum_{\bP} t^{2d(\bP)}P(S_i,t)P(\sym^{p_i}(X),t)\, ,
\end{eqnarray*}
where the sum is over all partitions of $d$ of length $k$.
\end{proposition}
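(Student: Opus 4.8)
The plan is to read off the Poincar\'e polynomial of $Q\,:=\,\quot(\oplus_i\widetilde{\sL}_i/S,d)$ from the Bialynicki--Birula decomposition attached to the one-parameter subgroup $\gm\,\hookrightarrow\,\gm^k$ fixed above, feeding in the fixed-point analysis already carried out. Essentially all the geometric input is in place; what remains is to identify the Quot schemes of line bundles and to invoke the cohomological consequences of the torus action.

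First I would settle the identity $\quot(\sL_i/S_i,p_i)\,=\,\quot(\sO/S_i,p_i)\,=\,\sym^{p_i}(X)\times S_i$. Giving a rank-zero, degree-$p$ torsion quotient of a line bundle $L$ on $X$ is the same as giving an effective divisor of degree $p$ on $X$: the kernel of such a quotient is a rank-one subsheaf of $L$ of colength $p$, hence of the form $L(-D)$ for a unique effective divisor $D$ with $\deg D\,=\,p$, and conversely. This identification is compatible with base change --- over a scheme $T$, a $T$-flat torsion quotient of $\sL_i$ of relative degree $p$ has locally free kernel, which, twisted by $\sL_i^{-1}$, is the ideal sheaf of an effective relative Cartier divisor of relative degree $p$. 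Hence $\quot(\sL_i/S_i,p_i)$ represents the functor of such divisors, so it is $S_i\times\sym^{p_i}(X)$, independently of the choice of $\sL_i$; in particular it coincides with $\quot(\sO/S_i,p_i)$, which equals $\sym^{p_i}(X)\times S_i$ by Lemma~\ref{l:qPullback} and $\quot(\sO,p_i)\,=\,\sym^{p_i}(X)$. The K\"unneth formula then gives $P(\quot(\sL_i/S_i,p_i),t)\,=\,P(S_i,t)\,P(\sym^{p_i}(X),t)$ and, for a partition $\bP$ of $d$ of length $k$, $P(\quot(\sL_\bullet/S,\bP),t)\,=\,\prod_{i=1}^k P(S_i,t)\,P(\sym^{p_i}(X),t)$.

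Next I would set up the Bialynicki--Birula decomposition. The scheme $Q$ is smooth by the deformation computation recorded above, and it is projective over the projective variety $S$, hence projective. By the Proposition identifying the fixed-point loci with partitions, the fixed locus of the $\gm$-action is $\bigsqcup_{\bP}\quot(\sL_\bullet/S,\bP)$, the union being over partitions $\bP$ of $d$ of length $k$, and the computation preceding the statement shows that the positive part $N_x^+$ of the normal space to the component indexed by $\bP$ has dimension $d(\bP)\,=\,\sum_{i=1}^k(i-1)p_i$, constant along that component. Thus the Bialynicki--Birula theorem produces a decomposition of $Q$ into locally closed plus-cells $W^+_\bP$, one for each such $\bP$, with $W^+_\bP$ a Zariski-locally-trivial affine bundle of rank $d(\bP)$ over $\quot(\sL_\bullet/S,\bP)$; since $Q$ is projective, this decomposition is filtrable by closed subvarieties.

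Finally I would extract the Poincar\'e polynomial. For a smooth projective variety the Bialynicki--Birula decomposition is perfect: running the long exact sequences in compactly supported cohomology along the filtration by closed cells, and using the Thom isomorphism $H^\bullet_c(W^+_\bP)\,\cong\,H^{\bullet-2d(\bP)}(\quot(\sL_\bullet/S,\bP))$ for the affine bundle over the smooth projective fixed component, one finds that the connecting maps all vanish; this is the standard consequence of the theory, obtained for instance by playing the plus and minus decompositions against Poincar\'e duality, in the spirit of the argument of \cite{Bifet}. Hence
\[
P(Q,t)\,=\,\sum_{\bP}t^{2d(\bP)}P(\quot(\sL_\bullet/S,\bP),t),
\]
and substituting the line-bundle computation of the second paragraph yields the asserted formula. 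The only point requiring more than bookkeeping is the perfectness of the decomposition; that is supplied by the general Bialynicki--Birula theory, and everything else assembles facts already established in this section, so I expect no serious obstacle.
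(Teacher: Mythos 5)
Your proposal is correct and follows essentially the same route as the paper: identify the line-bundle Quot schemes with products of symmetric powers (the paper does this by tensoring the universal exact sequences with $\sL_i$ and invoking Lemma~\ref{l:qPullback}, which is equivalent to your divisor-functor argument), then apply the Bia{\l}ynicki--Birula decomposition for the chosen one-parameter subgroup together with perfectness of the stratification, using $\dim N^+_x = d(\bP)$ to produce the weight $t^{2d(\bP)}$. Your accounting via compactly supported cohomology of the plus-cells is just the mirror of the paper's phrasing in terms of the codimension of the minus-strata, so there is no substantive difference.
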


\begin{proof} 
The isomorphism $ \quot(\sO/S_i, p_i)\,\stackrel{\sim}{\longrightarrow}\,
\quot(\sL_i/S_i,p_i)$ is by tensoring exact sequences with
$\sL_i$. The second equality is via (\ref{l:qPullback}).

We need to recall the theorems of \cite{bb:73} and \cite{kirwan:88} in our present
context. The torus action determines two stratifications of the variety
$\quot(\oplus_i\widetilde{\sL}_i/S,d)$. The strata are in bijection with connected
components of the fixed point locus which are in turn in bijection with partitions of
$d$ of length $k$. Given such a partition $\bP$, its corresponding strata are 
\[
\quot(\sL_\bullet/S,\bP)^+ \,:=\,
\{ x\,\mid\, \lim_{t\to 0}t.x \,\in\, \quot(\sL_\bullet/S,\bP)\}
\]
and 
\[
\quot(\sL_\bullet/S,\bP)^- \,:=\, \{ x\,\mid\, \lim_{t\to \infty}t.x \,\in
\, \quot(\sL_\bullet/S,\bP)\}\, .
\]
Both of these stratifications are known to be perfect. There are affine fibrations 
\[
\quot(\sL_\bullet/S,\bP)^+ \,\longrightarrow\, \quot(\sL_\bullet/S,\bP)\quad\text{and}\quad
\quot(\sL_\bullet/S,\bP)^-\,\longrightarrow\, \quot(\sL_\bullet/S,\bP)
\]
of relative dimensions $\dim N^+_x$ and $\dim N^-_x$ respectively, where
$x\,\in\,\quot(\sL_\bullet/S,\bP)$ is an arbitrary closed point. It follows that the
codimension of $\quot(\sL_\bullet/S,\bP)^-$ is $\dim N^+_x$ which gives the above
formula for the Poincare polynomial. 
\end{proof}

\subsection{The cohomology of $\quot(r,d_p,d_z)$}

In this subsection we describe the Poincar\'e polynomial of $\quot(r,d_p,d_z)$.
Consider the morphism $\varphi$ in \eqref{e5}. There is a natural action of the
torus $\gm^r$ on the target $\quot(\sO^r,d_p)\, =\, {\mathcal Q}(r,d_p)$. This
action clearly lifts to the domain $\quot(r,d_p,d_z)$ for $\varphi$.

The previous subsection provides us with a decomposition and an induced
formula for the Poincar\'e polynomial of $\quot(r,d_p,d_z)$. Let us recall it
quickly in the present context. There is a bijection between connected components of
fixed point locus and partitions of $d_p$ of length $r$. Given a partition
$\bP\,=\, (p_1\, ,p_2\, ,\cdots\, , p_r)$, the corresponding
component of $\quot(\sO^r,d_p)^{\gm}$ is
\begin{eqnarray*}
\quot(\sO,p_1)\times\cdots \times \quot(\sO,p_r) &=& 
\sym^{p_1}(X) \times \cdots \times \sym^{p_r}(X) \\
&=& \sym^\bP X.
\end{eqnarray*}
There are universal divisors $D_{p_i}^\univ$ inside $\sym^{p_i}(X)\times X$. The
component of $\quot(r,d_p,d_z)^{\gm^r}$ corresponding to $\bP$, that is 
\[
\phi^{-1}(\sym^{p_1}(X) \times \sym^{p_2}(X)\times \cdots\times \sym^{p_r}(X))
\]
is then identified with $\quot(\oplus_i\sO_{\sym^{p_i}(X)\times X}(D_{p_i}^\univ)
/\sym^{p_i}(X),d_z)$. As the morphism $\varphi$ in \eqref{e5} is smooth, and smooth
morphisms preserve codimension, we obtain the following formula for the Poincar\'e
polynomial:
 \begin{equation}\label{e:poincare1}
 P(\quot(r,d_p,d_z),t) \,=\, \sum_{\bP} t^{2d(\bP)}P(\quot(\oplus_i
\sO_{\sym^{p_i}(X)\times X}(D_{p_i}^\univ)/\sym^{p_i}(X),d_z),t).
 \end{equation}
To complete the calculation we need to compute the Poincar\'e polynomials of
$$\quot(\oplus_i\sO_{\sym^{p_i}(X)\times X}(D_{p_i}^\univ)/\sym^{\bP}(X),d_z)\, .$$

Once again Proposition \ref{p:bifet} applies. The connected components of the fixed
point loci are in bijection with partitions of $d_z$ of length $r$. Given a partition
$\bQ\,=\,(q_1\, ,\cdots\, , q_r)$, the corresponding connected component is
\[
\quot(\sO_{\sym^{p_1}(X)\times X}(-D_{p_1})/\sym^{p_1}(X),q_1)\times \cdots \times
\quot(\sO_{\sym^{p_r}(X)\times X}(-D_{p_r})/\sym^{p_r}(X),q_r)
\]
which is canonically isomorphic to
\[
\sym^{\bP,\bQ}X\,:=\,\sym^{p_1}(X)\times\cdots\times\sym^{p_r}(X)\times \sym^{q_1}(X)
\times\cdots\times\sym^{q_r}(X)\, .
\]
We obtain the following formula:
$$
P(\quot(\oplus_i\sO{\sym^{p_i}(X)\times X}(D_{p_i}^\univ)/\sym^{\bP}(X),d_z))
\,=\, \sum_{\bQ} t^{2d(\bQ)}P(\sym^{\bP,\bQ}(X),t)\, .
$$

Putting this all together we obtain the following:

\begin{theorem}\label{thm1}
The Poincar\'e polynomial for $\quot(r,d_p,d_z)$ is
\[
P(\quot(r,d_p,d_z),t) \,=\, \sum_{\bP}\sum_{\bQ} t^{2[d(\bP)+d(\bQ)]}P(\sym^{\bP}(X),
t)P(\sym^\bQ (X),t)\, ,
\]
where $\bP$ varies over all partitions of $d_p$ of length $r$ and $\bQ$ varies
over all partitions of $d_z$ of length $r$.
\end{theorem}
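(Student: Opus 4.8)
The plan is to obtain the theorem by assembling the two localization computations already set up above; no genuinely new ingredient is needed, only careful bookkeeping. First I would start from the formula \eqref{e:poincare1}, which already expresses $P(\quot(r,d_p,d_z),t)$ as a sum, over partitions $\bP\,=\,(p_1\, ,\cdots\, ,p_r)$ of $d_p$ of length $r$, of $t^{2d(\bP)}$ times $P(\quot(\oplus_i\sO_{\sym^{p_i}(X)\times X}(D_{p_i}^\univ)/\sym^{\bP}(X),d_z),t)$. I would recall why \eqref{e:poincare1} holds: it is the Bifet-type localization of the preceding subsection applied to the $\gm^r$-action on $\quot(r,d_p,d_z)$ lifting the one on $\quot(r,d_p)$, where the connected components of the fixed-point locus are the $\varphi$-preimages of the components $\sym^{\bP}(X)\,\subset\,\quot(r,d_p)^{\gm^r}$, smoothness of $\varphi$ carries the codimension $d(\bP)$ up to the total space, and perfection of the Bia\l ynicki--Birula stratification makes the Poincar\'e polynomial additive over the strata with the shifts $t^{2d(\bP)}$.

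Next I would apply Proposition \ref{p:bifet} a second time, to each of these summands, taking $k\,=\,r$, base $S\,=\,\sym^{\bP}(X)\,=\,\prod_{i=1}^r\sym^{p_i}(X)$, and line bundles $\sL_i\,=\,\sO_{\sym^{p_i}(X)\times X}(D_{p_i}^\univ)$ pulled back to $S\times X$. The first assertion of that proposition identifies $\quot(\sL_i/\sym^{p_i}(X),q_i)$ with $\quot(\sO/\sym^{p_i}(X),q_i)\,=\,\sym^{q_i}(X)\times\sym^{p_i}(X)$, so the fixed-point component attached to a partition $\bQ\,=\,(q_1\, ,\cdots\, ,q_r)$ of $d_z$ of length $r$ is $\sym^{\bP,\bQ}(X)$; feeding this into the Poincar\'e-polynomial formula of Proposition \ref{p:bifet} and using the K\"unneth theorem to split $P(\sym^{\bP,\bQ}(X),t)\,=\,P(\sym^{\bP}(X),t)\,P(\sym^{\bQ}(X),t)$ yields $P(\quot(\oplus_i\sO_{\sym^{p_i}(X)\times X}(D_{p_i}^\univ)/\sym^{\bP}(X),d_z),t)\,=\,\sum_{\bQ}t^{2d(\bQ)}P(\sym^{\bP}(X),t)\,P(\sym^{\bQ}(X),t)$.

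Finally I would substitute this into \eqref{e:poincare1}, pull the factor $t^{2d(\bP)}$ inside the inner sum over $\bQ$, and combine the exponents, arriving at $\sum_{\bP}\sum_{\bQ}t^{2[d(\bP)+d(\bQ)]}P(\sym^{\bP}(X),t)\,P(\sym^{\bQ}(X),t)$ with $\bP$ over partitions of $d_p$ of length $r$ and $\bQ$ over partitions of $d_z$ of length $r$, which is the claim. The one step I expect to need care — everything else being reindexing and K\"unneth — is checking that the hypotheses of the Bifet-type localization hold at \emph{both} stages: that all the Quot schemes, symmetric powers and structure morphisms involved are smooth projective (so that the Bia\l ynicki--Birula strata are perfect, via \cite{bb:73} and \cite{kirwan:88} as in Proposition \ref{p:bifet}), and that the second localization genuinely applies fibrewise over $\sym^{\bP}(X)$ — which it does, since the relevant fixed-point component is precisely a relative Quot scheme of the shape handled by Proposition \ref{p:bifet} and $\varphi$ is smooth.
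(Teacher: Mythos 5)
Your proposal is correct and follows essentially the same route as the paper: the first Bia\l ynicki--Birula/Bifet localization for the lifted $\gm^r$-action gives \eqref{e:poincare1}, a second application of Proposition \ref{p:bifet} over the base $\sym^{\bP}(X)$ identifies the fixed-point components with $\sym^{\bP,\bQ}(X)$ and yields the inner sum over $\bQ$, and K\"unneth plus combining exponents finishes the computation. The only difference is that you spell out the K\"unneth splitting and the smoothness/perfection hypotheses explicitly, which the paper leaves implicit in its ``putting this all together'' step.
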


Poincar\'e polynomial of $\sym^{n}(X)$ is the coefficient of $t^n$ in
$$
\frac{(1+tx)^{2g_X}}{(1-t)(1-tx^2)}\, ,
$$
where $g_X$ is the genus of $X$ \cite[p. 322, (4.3)]{Ma}. Using this and
Theorem \ref{thm1} we get an explicit expression for $P(\quot(r,d_p,d_z),t)$.

\section*{Acknowledgements}

We thank the National University of Singapore for its hospitality.

%%%%%%%%%%%%%%%%%%%%%%%%%%%%%%%%%%%%%%%%%%%%%%%%%%%%%%%%%%%%%%%%% 

\end{document}